\newcommand{\N}{\mathbb{N}}
\newcommand{\R}{\mathbb{R}}
\newcommand{\Div}{\mathrm{div}\,}
\newcommand{\dx}{\,{\rm d} x}
\renewcommand{\phi}{\varphi}
\newtheorem{lemma}{Lemma}[section]
\newtheorem{thm}[lemma]{Theorem}
\newtheorem{prop}[lemma]{Proposition}
\theoremstyle{definition}
\newtheorem{defi}[lemma]{Definition}
\newtheorem{rmk}[lemma]{Remark}
\newtheorem{ex}[lemma]{Example}
\numberwithin{equation}{section}
\begin{document}
\title[Quasilinear Dirichlet systems with competing operators and convection ]{Quasilinear Dirichlet systems with competing operators and convection}
\author[L. Gambera]{Laura Gambera}
\address[L. Gambera]{Dipartimento di Matematica e Informatica, Universit\`a degli Studi di Catania, Viale A. Doria 6, 95125 Catania, Italy}
\email{laura.gambera@unipa.it}
\author[S.A. Marano]{Salvatore A. Marano}
\address[S.A. Marano]{Dipartimento di Matematica e Informatica, Universit\`a degli Studi di Catania, Viale A. Doria 6, 95125 Catania, Italy}
\email{marano@dmi.unict.it}
\author[D. Motreanu]{Dumitru Motreanu}
\address[D. Motreanu]{Department of Mathematics, University of Perpignan, 66860 Perpignan, France;
College of Science, Yulin Normal University, Yulin, People’s Republic of China}
\email{motreanu@univ-perp.fr}
\maketitle
\begin{abstract}
In this paper, we consider a quasi-linear Dirichlet system with possible competing $(p,q)$-Laplacians and convections. Due to the lack of ellipticity, monotonicity, and variational structure, the standard approaches to the existence of weak solutions cannot be adopted. Nevertheless, through an approximation procedure and a corollary of Brouwer's fixed point theorem we show that the problem admits a solution in a suitable sense.
\end{abstract}
\let\thefootnote\relax
\footnote{{\bf{MSC 2020}}: 35J47, 35J92, 35D30. }
\footnote{{\bf{Keywords}}: Quasilinear Dirichlet systems, competing (p,q)-Laplacian, convection term, generalized solution, approximation}
\section{Introduction and main result}
Let $\Omega\subseteq\R^N$, $N\geq 2$, be a bounded domain with a Lipschitz boundary $\partial \Omega$, let $1<q_i<p_i<N$, and let $\mu_i\in\R$, $i=1,2$. Consider the system
\begin{equation}\label{prob}\tag{P}
\left\{
\begin{alignedat}{2}
-\Delta_{p_1} u+\mu_1\Delta_{q_1} u & =f_1(x,u,v,\nabla u,\nabla v) && \quad\mbox{in}\;\;\Omega,\\
-\Delta_{p_2} v+\mu_2\Delta_{q_2} v & =f_2(x,u,v,\nabla u,\nabla v) && \quad\mbox{in}\;\;\Omega,\\
u=v & =0 && \quad\mbox{on}\;\;\partial\Omega,
\end{alignedat}
\right.
\end{equation}
where $\Delta_r u:= \Div(|\nabla u|^{r-2}\nabla u)$, with $1<r<+\infty$, denotes the so-called $r$-Laplacian while $f_i:\Omega\times\R^2\times\R^{2N}\to\R$, $i=1,2$, are Carathéodory functions satisfying appropriate conditions; see \ref{growthcondition}--\ref{Nboundeness} below. Since $q_i<p_i$, we seek solutions $(u,v)$ to \eqref{prob} in the space
\begin{equation}\label{defX}
X:=W^{1,p_1}_0(\Omega) \times W^{1,p_2}_0(\Omega).
\end{equation}
The involved differential operator 
$$A_{i,\mu_i}(w):=-\Delta_{p_i} w+\mu_i\Delta_{q_i}w,\;\; w\in W^{1,p_i}_0(\Omega),$$
exhibits completely different behaviors according to whether
$$\mu_i<0,\quad\mu_i=0,\quad\mbox{or}\;\;\mu_i>0.$$
If $\mu_i<0$ then $A_{i,\mu_i}$ is basically patterned \cite[Section 2.1]{MaMo} after the well known $(p_i,q_i)$-Laplacian, namely $\Delta_{p_i}w+\Delta_{q_i}w$, which turns out non-homogeneous because $p_i\neq q_i$. When $\mu_i=0$ it coincides with the classical negative $p_i$-Laplacian. Both cases have been widely investigated and meaningful results are by now available in the literature; see, e.g.,  \cite[Chapter 6]{M2018}. On the contrary, for $\mu_i>0$ the operator $A_{i,\mu_i}$ contains the \textit{difference} between the $q_i$-Laplacian and the $p_i$-Laplacian. It is usually called competitive and, as already pointed out in \cite{ML,M}, doesn't comply with any ellipticity or monotonicity condition. In fact,
given $u_0\in W^{1,p_i}_0(\Omega)\setminus\{0\}$ and chosen $u:=tu_0$, $t>0$, the expression
$$\langle A_{i,\mu_i}(u),u\rangle=t^p\Vert\nabla u_0\Vert_p^p
-\mu_i t^q\Vert u_0\Vert_q^q$$
turns out negative for $t$ small and positive when $t$ is large. Hence, if $\mu_i>0$, problem \eqref{prob} gathers at least two challenging technical features:
\begin{itemize}
\item driving operators are neither elliptic nor monotone. This prevents to use surjectivity results for pseudo-monotone operators, nonlinear regularity theory, besides comparison principles.
\item right-hand sides depend on the gradient of solutions, so that variational techniques cannot be enforced directly.
\end{itemize}
To overcome these difficulties we first exploit the Galerkin method, thus working in a sequence $\{X_n\}$ of finite dimensional sub-spaces of $X$. A \textit{finite dimensional approximate solution} $(u_n,v_n)\in X_n$ to \eqref{prob} is obtained for every $n\in\N$ via a corollary of the Brouwer fixed point theorem. Next, an appropriate passage to the limit yields a solution in a generalized sense; cf. Definitions \ref{Def1}--\ref{Def2}.

The idea of weakening the notion of solution to treat more general situations is classical; cf. for instance \cite[p. 183]{S} and \cite{Z1,Z2}.

As far as we know, equations driven by competing operators have previously been investigated only in \cite{ML,M}. The main difference between them concerns right-hand sides. The paper \cite{ML} doesn't treat convective reactions. Solutions are thus obtained through  Galerkin's method and Ekeland's variational principle. In \cite{M}, instead, any variational approach is forbidden, because the nonlinearity depends on the gradient of solutions. 

This work continues the study started in \cite{M}, treating \textit{convective systems driven by competing operators}. Section 2, beyond some auxiliary results, contains the notions of generalized and strong generalized solution to \eqref{prob}, adapted from Definitions 2.3 and 2.5, respectively, in \cite{M}. A sequence of finite dimensional approximate solutions is constructed in Section 3. Finally, Section 4 deals with a limit procedure that provides a generalized solution to \eqref{prob}. Under a growth condition stronger than \ref{growthcondition} (see \ref{growthcondition2} below) we next show that \eqref{prob} admits a strong generalized solution, which becomes a weak one as soon as $\mu_1\vee\mu_2<0$. The section ends by exhibiting a natural class of reactions that fulfill our hypotheses; cf. Example \ref{finex}.
\section{Preliminaries and hypotheses}
Let $Y$ be a real Banach space and let $Y^*$ be its topological dual, with duality brackets $\langle\cdot,\cdot\rangle$. An operator $B:Y\to Y^*$ is said to be:
\begin{itemize}
\item \emph{bounded} when it maps bounded sets into bounded sets.
\item \emph{monotone} if $\langle B(y)-B(z),y-z\rangle\geq 0$ for all $y,z\in Y$.
\item \emph{pseudo-monotone} when $y_n\rightharpoonup y$ in $Y$ and $\displaystyle{\limsup_{n\to\infty}}\langle B(y_n),y_n-y\rangle\leq 0$ imply $$\liminf_{n\to\infty}\langle B(y_n),y_n-z\rangle\geq\langle B(y),y-z\rangle\;\;\forall\, z\in Y.$$
\item \emph{of type $(\mathrm{S})_+$} provided
\begin{equation*}
y_n\rightharpoonup y\;\;\mbox{in $Y$,}\;\;\limsup_{n\to\infty}\langle B(y_n),y_n-y\rangle\leq 0\implies y_n\to y\;\;\mbox{in $Y$.}   
\end{equation*}
\end{itemize}
The following consequence of Brouwer's fixed point theorem will play a basic role in the sequel. For the proof we refer to \cite[p. 37]{S}.
\begin{thm}\label{brouwer}
Let $(Y,\|\cdot\|)$ a finite dimensional normed space and let $B:Y\to Y^*$ be continuous. Assume there exists $R>0$ such that
$$\langle B(y),y\rangle\ge 0\quad\mbox{for all $y\in Y$ with $\|y\|=R$.}$$ 
Then the equation $B(y)=0$ admits a solution $\bar y\in Y$ fulfilling $\|\bar y\|\le R$.
\end{thm}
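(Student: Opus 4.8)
The plan is to reduce the statement to the classical Brouwer fixed point theorem by a contradiction argument. Suppose the conclusion fails, that is, $B(y)\neq 0$ for every $y$ in the closed ball $\overline{B}_R:=\{y\in Y:\|y\|\le R\}$. Since $Y$ is finite dimensional, fix an inner product $(\cdot,\cdot)$ on $Y$ and let $\iota:Y^*\to Y$ be the associated Riesz isomorphism, so that $\langle\xi,z\rangle=(\iota\xi,z)$ for all $\xi\in Y^*$ and $z\in Y$. Because $\iota$ is injective, the assumption yields $\iota(B(y))\neq 0$ for every $y\in\overline{B}_R$.

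Next I would introduce the map $T:\overline{B}_R\to Y$ defined by
$$T(y):=-R\,\frac{\iota(B(y))}{\|\iota(B(y))\|},\qquad y\in\overline{B}_R.$$
It is well defined, and it is continuous since $B$ is continuous, $\iota$ is linear, and $y\mapsto\|\iota(B(y))\|$ is continuous and nowhere zero on $\overline{B}_R$. Moreover $\|T(y)\|=R$ for every $y$, so $T$ maps the compact convex set $\overline{B}_R$ into itself. As $\overline{B}_R$ is homeomorphic to a closed Euclidean ball, Brouwer's theorem applies and furnishes $\bar y\in\overline{B}_R$ with $T(\bar y)=\bar y$; in particular $\|\bar y\|=\|T(\bar y)\|=R$.

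To reach a contradiction I would then test the duality pairing at $\bar y$. Using $\bar y=T(\bar y)$ and the defining property of $\iota$,
$$\langle B(\bar y),\bar y\rangle=-\frac{R}{\|\iota(B(\bar y))\|}\,\langle B(\bar y),\iota(B(\bar y))\rangle=-\frac{R}{\|\iota(B(\bar y))\|}\,\bigl(\iota(B(\bar y)),\iota(B(\bar y))\bigr)<0,$$
the last inequality holding because $\iota(B(\bar y))\neq 0$. Since $\|\bar y\|=R$, this contradicts the hypothesis $\langle B(y),y\rangle\ge 0$ for all $y$ on the sphere of radius $R$. Hence the initial assumption is impossible and $B$ vanishes at some $\bar y\in\overline{B}_R$, i.e. at some $\bar y$ with $\|\bar y\|\le R$.

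I do not expect a genuine difficulty here: this is the standard way of turning a boundary sign condition into a fixed point problem. The only points requiring a word of care are the degenerate case $Y=\{0\}$, in which $B(0)=0$ trivially, and the verification that $\overline{B}_R$ fulfils the hypotheses of Brouwer's theorem — compactness and convexity, both immediate in finite dimensions — together with the remark that the auxiliary choice of an inner product (equivalently, of an isomorphism $Y\cong Y^*$) is irrelevant to the topological conclusion.
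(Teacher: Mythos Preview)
Your argument is correct and is the standard one. The paper itself does not supply a proof of this theorem; it only says ``For the proof we refer to \cite[p.~37]{S}.'' What you wrote is essentially the classical reduction found there: identify $Y$ with $\R^m$ (you do this via a Riesz isomorphism coming from an auxiliary inner product, Showalter via a choice of basis), suppose $B$ has no zero in $\overline{B}_R$, push everything radially to the sphere by $y\mapsto -R\,\iota(B(y))/\|\iota(B(y))\|$, obtain a Brouwer fixed point on the sphere, and read off $\langle B(\bar y),\bar y\rangle<0$, contradicting the boundary sign condition. The only cosmetic remark is that you use two different structures on $Y$---the given norm $\|\cdot\|$ for the ball and the scaling, and the inner product $(\cdot,\cdot)$ for the Riesz map and the final positivity---and it might be worth saying explicitly that $\|\cdot\|$ in the definition of $T$ is the original norm, so that $\|T(y)\|=R$ holds on the nose; but the logic is sound either way since all norms on $Y$ are equivalent.
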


The symbol $|E|$ stands for the $N$-dimensional Lebesgue measure of the set $E\subseteq\R^{N}$ and
\begin{equation*}
a\vee b:=\max\{a,b\},\;\; a\wedge b:=\min\{a,b\}\;\;\quad\forall\, a,b\in\R.
\end{equation*}
Given a real number $r>1$, set $r':=r/(r-1)$. If $r<N$ then $r^*:=Nr/(N-r)$. We denote by $\Vert\cdot\Vert_r$ the usual norm of $L^r(\Omega)$, while $\Vert\cdot\Vert_{1,r}$ indicates the norm of $W^{1,r}_0(\Omega)$ coming from Poincaré's inequality, i.e.,
\begin{equation*}
\Vert u\Vert_{1,r}:=\Vert\nabla u\Vert_r\, ,\quad u\in W^{1,r}_0(\Omega).
\end{equation*}
The symbol $W^{1,-r'}(\Omega)$ stands for the dual space of $W^{1,r}_0(\Omega)$ while $\lambda_{1,r}$ is the first eigenvalue of the operator $-\Delta_r$ in $W^{1,r}_0(\Omega)$. One has
\begin{equation}\label{5}
\lambda_{1,r}=\inf_{u\in W^{1,r}_0(\Omega)\setminus\{0\}}\frac{\|u\|_{1,r}^r}{\| u\|_r^r}>0.
\end{equation}
The following facts are known; see, e.g., \cite[Theorem 9.16]{B} and \cite{K}.
\begin{prop}\label{embeddingresult}
Let $1<r<N$. Then the embedding:
\begin{itemize}
\item[{\rm (a)}] $W^{1,r}_0(\Omega)\hookrightarrow W^{1,s}_0(\Omega )$ is continuous for all $s\in[1,r]$.
\item[{\rm (b)}] $W^{1,r}_0(\Omega)\hookrightarrow L^s(\Omega)$ is continuous for every $s\in[1,r^*]$ and compact when $s<r^*$.
\item[{\rm (c)}] $L^{s'}(\Omega)\hookrightarrow W^{1,-r'}(\Omega)$ is continuous for all $s\in[1,r^*]$.
\end{itemize} 
\end{prop}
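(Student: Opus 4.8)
The plan is to establish the three items independently, each by reduction to classical facts about Sobolev spaces on the bounded Lipschitz domain $\Omega$; nothing about the system \eqref{prob} enters here.

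For item (a) the only ingredient is that $|\Omega|<+\infty$. Hölder's inequality with conjugate exponents $r/s$ and $r/(r-s)$ gives $\|\nabla w\|_s\le|\Omega|^{1/s-1/r}\|\nabla w\|_r$ for every $w\in C^\infty_c(\Omega)$ and every $s\in[1,r)$ (the case $s=r$ being trivial). Since, by Poincaré's inequality \eqref{5}, $W^{1,t}_0(\Omega)$ equipped with $\|\cdot\|_{1,t}=\|\nabla\cdot\|_t$ is the completion of $C^\infty_c(\Omega)$ for that norm, the previous bound says that the identity on $C^\infty_c(\Omega)$ is continuous from the $\|\cdot\|_{1,r}$-topology to the $\|\cdot\|_{1,s}$-topology; it therefore extends uniquely to a continuous linear operator $W^{1,r}_0(\Omega)\to W^{1,s}_0(\Omega)$, and evaluating on an approximating sequence (whose limits in $L^s(\Omega)$ and in $L^r(\Omega)$ coincide) identifies this extension with the inclusion map, of norm at most $|\Omega|^{1/s-1/r}$.

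For item (b) I would invoke the Gagliardo--Nirenberg--Sobolev inequality $\|w\|_{r^*}\le C(N,r)\|\nabla w\|_r$, valid for $w\in C^\infty_c(\Omega)$, which by density yields the continuous embedding $W^{1,r}_0(\Omega)\hookrightarrow L^{r^*}(\Omega)$; composing with Hölder's inequality on the bounded set $\Omega$, exactly as in (a), extends continuity to every $s\in[1,r^*]$. The compactness for $s<r^*$ is the Rellich--Kondrachov theorem, which for the space $W^{1,r}_0(\Omega)$ holds on an arbitrary bounded domain; I would simply cite it, e.g. \cite[Theorem 9.16]{B}. This is the item carrying the genuine analytic weight, but it is entirely standard.

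Finally, item (c) is the functional-analytic dual of (b). By definition $W^{1,-r'}(\Omega)=\big(W^{1,r}_0(\Omega)\big)^*$. For $g\in L^{s'}(\Omega)$ with $s\in[1,r^*]$ I would set $\langle T_g,u\rangle:=\int_\Omega gu\dx$ for $u\in W^{1,r}_0(\Omega)$; Hölder's inequality together with item (b) gives $|\langle T_g,u\rangle|\le\|g\|_{s'}\|u\|_s\le C\|g\|_{s'}\|u\|_{1,r}$, so $T_g\in W^{1,-r'}(\Omega)$ with $\|T_g\|_{W^{1,-r'}}\le C\|g\|_{s'}$, and $g\mapsto T_g$ is linear and bounded. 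It is injective because $T_g=0$ forces $\int_\Omega g\phi\dx=0$ for all $\phi\in C^\infty_c(\Omega)$, whence $g=0$ a.e.; identifying $g$ with $T_g$ gives the asserted continuous embedding. The only delicate points, hardly obstacles, are bookkeeping: checking that the pairing used in (c) is the $L^2$-type pairing restricted to $W^{1,r}_0(\Omega)$, consistent with the weak formulation appearing later, and that the endpoint $s=r^*$ is genuinely admissible in (b) and (c). As all the substance is classical, I would keep the write-up at the level of the cited references rather than reprove the Sobolev or Rellich--Kondrachov theorems.
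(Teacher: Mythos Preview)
Your proposal is correct. The paper does not prove this proposition at all: it is stated immediately after the sentence ``The following facts are known; see, e.g., \cite[Theorem 9.16]{B} and \cite{K}'' and no argument is given. Your sketch supplies exactly the standard reasoning behind those citations --- H\"older on the bounded domain for (a), the Sobolev inequality plus Rellich--Kondrachov for (b), and duality of (b) for (c) --- so there is nothing to compare beyond noting that you have written out what the paper simply attributes to the literature.
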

By (a) the differential operator $-\Delta_{p_i}+\mu_i\Delta_{q_i}$ ($i=1,2$) turns out  well posed on $W^{1,p_i}_0(\Omega)$ because
$$1 <q_i<p_i<N\implies W^{1,p_i}_0(\Omega)\hookrightarrow W^{1,q_i}_0(\Omega).$$
Accordingly, we will seek solutions $(u,v)$ to system \eqref{prob} lying in the product space $X$ given by \eqref{defX},
%$X:=W^{1,p_1}_0(\Omega)\times W^{1,p_2}_0(\Omega)$ 
endowed with the usual norm
\begin{equation*}
\|(u,v)\|:= \|u\|_{1,p_1}+\|v\|_{1,p_2}\, .
\end{equation*}
It is evident that $X$ is a separable Banach space and one has
$$X^*=W^{1,-p_1'}(\Omega)\times W^{1,-p_2'}(\Omega).$$
Let $A_{\mu_1,\mu_2}: X\to X^*$ defined by
\begin{equation*}
\begin{split}
\langle A_{\mu_1,\mu_2}(u,v),(\phi,\psi)\rangle
=\int_{\Omega} & \big(|\nabla u|^{p_1-2}\nabla u
-\mu_1|\nabla u|^{q_1-2}\nabla u\big)\nabla\phi\dx\\
& +\int_{\Omega}\big(|\nabla v|^{p_2-2}\nabla v
-\mu_2|\nabla v|^{q_2-2}\nabla v\big)\nabla\psi\dx
\end{split}
\end{equation*}
for all $(u,v),(\phi,\psi)\in X$. A simple argument, which exploits standard properties \cite{Pe} of the $r$-Laplacian, shows that $A_{\mu_1,\mu_2}$ turns out bounded and continuous. Moreover, if $\mu_1\vee\mu_2<0$ then $A_{\mu_1,\mu_2}$ is also monotone.

The growth conditions below will be posited on the reactions $f_1,f_2:\Omega\times\R^2 \times\R^{2N}\to\R$.

\begin{enumerate}[label={${\rm (H_1)}$}]
\hypertarget{H1}{}
\item\label{growthcondition}
There exist constants $C_i>0$ and functions $\sigma_i\in L^{(p_i^{*})'}(\Omega)$, $i=1,2$, such that
\begin{equation*}
\begin{split}
&|f_1(x,s,t,\xi,\nu)|\le
C_1\left[|s|^{p_1^*-1}+|t|^{\frac{p_2^*}{(p_1^*)'}}
+|\xi|^{\frac{p_1}{(p_1^*)'}}+|\nu|^{\frac{p_2}{(p_1^*)'}}\right]+\sigma_1(x),\\
&|f_2(x,s,t, \xi,\nu)|\le
C_2\left[|s|^{\frac{p_1^*}{(p_2^*)'}}+|t|^{p_2^*-1}
+|\xi|^{\frac{p_1}{(p_2^*)'}}+|\nu|^{\frac{p_2}{(p_2^*)'}}\right]+\sigma_2(x)
\end{split}
\end{equation*}
a.e. in $\Omega$ and for every $s,t\in\R$, $\xi,\nu\in\R^N$.
\end{enumerate}
\begin{rmk}
Since $p-1<\frac{p}{(p^{*})'}$ whatever $p>1$, the growth rate $\frac{p}{(p^*)'}$ of gradient terms in \ref{growthcondition} is better than the analogous one of \cite{M}, namely $p-1$.
\end{rmk}
Hereafter, $C$, $\hat C$, $\tilde C$, etc. will denote generic positive constants, which may change explicit value from line to line.
\begin{prop}\label{P1}
Under \ref{growthcondition}, the Nemytskii operator $\mathcal{N}_{f_i}: X \to
L^{(p_i^*)'}(\Omega)$ defined by
\begin{equation*}
\mathcal{N}_{f_i}(u,v):=f_i(\cdot,u,v,\nabla u,\nabla v)\quad \forall\, (u,v)\in X
\end{equation*}
is well posed, bounded, and continuous. 
\end{prop}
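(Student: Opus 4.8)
The plan is to verify the three assertions---well-posedness, boundedness, and continuity---in that order, treating $f_1$ and $f_2$ symmetrically (I write the argument for $f_1$; the one for $f_2$ is identical after swapping the roles of $p_1^*$ and $p_2^*$). The underlying tool throughout is the classical theorem on Nemytskii operators between Lebesgue spaces: if $g:\Omega\times\R^m\to\R$ is Carath\'eodory and $|g(x,z)|\le a(x)+b|z|^{\alpha}$ with $a\in L^{\beta}(\Omega)$ and $\alpha=\beta\cdot(\text{something})$ matching the target exponent, then $z\mapsto g(\cdot,z(\cdot))$ maps $L^{\alpha\beta}(\Omega;\R^m)$ continuously into $L^{\beta}(\Omega)$. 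The whole point of the specific exponents appearing in \ref{growthcondition} is precisely to make this bookkeeping work with $\beta=(p_1^*)'$.

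For well-posedness and boundedness, I would start from \ref{growthcondition} and raise both sides to the power $(p_1^*)'$. Using the elementary inequality $(a_1+\dots+a_k)^r\le C_k(a_1^r+\dots+a_k^r)$, the estimate $|f_1(x,u,v,\nabla u,\nabla v)|^{(p_1^*)'}$ is controlled, up to a constant, by $|u|^{(p_1^*-1)(p_1^*)'}+|v|^{p_2^*}+|\nabla u|^{p_1}+|\nabla v|^{p_2}+\sigma_1(x)^{(p_1^*)'}$. Now observe that $(p_1^*-1)(p_1^*)'=p_1^*$, that $\sigma_1\in L^{(p_1^*)'}(\Omega)$ by hypothesis so $\sigma_1^{(p_1^*)'}\in L^1(\Omega)$, that $|u|^{p_1^*}$ and $|v|^{p_2^*}$ are integrable by the Sobolev embedding Proposition~\ref{embeddingresult}(b) (namely $W^{1,p_i}_0(\Omega)\hookrightarrow L^{p_i^*}(\Omega)$), and that $|\nabla u|^{p_1}=|\nabla u|^{p_1}\in L^1(\Omega)$ simply because $u\in W^{1,p_1}_0(\Omega)$, likewise for $\nabla v$. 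Integrating over $\Omega$ therefore shows $\mathcal N_{f_1}(u,v)\in L^{(p_1^*)'}(\Omega)$, and moreover
\begin{equation*}
\|\mathcal N_{f_1}(u,v)\|_{(p_1^*)'}\le C\big(\|u\|_{1,p_1}^{\,p_1^*/(p_1^*)'}+\|v\|_{1,p_2}^{\,p_2^*/(p_1^*)'}+\|u\|_{1,p_1}^{\,p_1/(p_1^*)'}+\|v\|_{1,p_2}^{\,p_2/(p_1^*)'}+\|\sigma_1\|_{(p_1^*)'}\big),
\end{equation*}
which is bounded on bounded subsets of $X$; here I used the continuity of the embeddings in Proposition~\ref{embeddingresult}(b) to pass from $L^{p_i^*}$ norms to $\|\cdot\|_{1,p_i}$ norms. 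This gives both well-posedness and boundedness.

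For continuity, I would invoke the continuity part of the Nemytskii operator theorem directly, but set up cleanly to apply it. Let $(u_n,v_n)\to(u,v)$ in $X$. Then $u_n\to u$ in $W^{1,p_1}_0(\Omega)$ and $v_n\to v$ in $W^{1,p_2}_0(\Omega)$, hence, by Proposition~\ref{embeddingresult}(b), $u_n\to u$ in $L^{p_1^*}(\Omega)$ and $v_n\to v$ in $L^{p_2^*}(\Omega)$, while $\nabla u_n\to\nabla u$ in $L^{p_1}(\Omega;\R^N)$ and $\nabla v_n\to\nabla v$ in $L^{p_2}(\Omega;\R^N)$ by definition of the norms. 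Thus the quadruple $(u_n,v_n,\nabla u_n,\nabla v_n)$ converges to $(u,v,\nabla u,\nabla v)$ in the product space $L^{p_1^*}(\Omega)\times L^{p_2^*}(\Omega)\times L^{p_1}(\Omega;\R^N)\times L^{p_2}(\Omega;\R^N)$. The growth bound \ref{growthcondition} is exactly of the form required so that the Nemytskii operator associated with $f_1$ is continuous from this product space into $L^{(p_1^*)'}(\Omega)$ --- each monomial on the right has its exponent tuned so that raising to the power $(p_1^*)'$ lands in $L^1$. Therefore $\mathcal N_{f_1}(u_n,v_n)\to\mathcal N_{f_1}(u,v)$ in $L^{(p_1^*)'}(\Omega)$, and the composition with the continuous embeddings $X\to L^{p_1^*}\times L^{p_2^*}\times L^{p_1}\times L^{p_2}$ yields continuity of $\mathcal N_{f_1}:X\to L^{(p_1^*)'}(\Omega)$.

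I do not expect a genuine obstacle here; the content of the proposition is really the observation that the exponents in \ref{growthcondition} were chosen to make the Nemytskii machinery applicable with target exponent $(p_i^*)'$. The only point requiring a little care is the arithmetic identity $(p_i^*-1)(p_i^*)'=p_i^*$ and the three analogous checks for the cross term and the two gradient terms, together with keeping the two components' exponents straight (for $f_1$ everything is divided by $(p_1^*)'$, for $f_2$ by $(p_2^*)'$). If one prefers to avoid quoting the continuity theorem as a black box, the standard alternative --- extract a subsequence along which convergence is a.e. plus domination by a fixed $L^1$ function, apply dominated convergence, and use the subsequence principle --- works verbatim, the domination being supplied by the same raised-to-the-power estimate used for boundedness.
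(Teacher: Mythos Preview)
Your proposal is correct and follows essentially the same route as the paper: raise \ref{growthcondition} to the power $(p_i^*)'$, check that each resulting term lies in $L^1(\Omega)$ via Proposition~\ref{embeddingresult}(b) to obtain well-posedness and boundedness, and then invoke the classical continuity theorem for Nemytskii operators (the paper cites \cite[Theorem~2.3]{DF} directly, whereas you spell out the intermediate product space $L^{p_1^*}\times L^{p_2^*}\times L^{p_1}\times L^{p_2}$, but this is the same argument).
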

\begin{proof}
Let $i=1$ (the case $i=2$ can be treated similarly). If $(u,v)\in X$ then $u\in L^{p_1}(\Omega)$, $v\in L^{p_2}(\Omega)$, $\nabla u\in (L^{p_1}(\Omega))^N$, and $\nabla v\in (L^{p_2}(\Omega))^N$. Through \ref{growthcondition} we have
\begin{equation*}
|f_1(\cdot,u,v,\nabla u,\nabla v)|\le
C_1\left[|u|^{p_1^*-1}+|v|^{\frac{p_2^*}{(p_1^*)'}} +|\nabla u|^{\frac{p_1}{(p_1^*)'}} +|\nabla v|^{\frac{p_2}{(p_1^*)'}}\right]+\sigma_1
\end{equation*}
a.e. in $\Omega$, whence
\begin{equation}\label{wellpos}
\begin{split}
&\int_\Omega |f_1(\cdot,u,v,\nabla u,\nabla v)|^{(p_1^{*})'}\dx\\
&\le\ C\left[\int_{\Omega}(|u|^{p_1^*}+|v|^{p_2^*} +|\nabla u|^{p_1}
+|\nabla v|^{p_2})\dx
+\int_{\Omega}\sigma_1^{(p_1^*)'}\dx\right]<+\infty
\end{split}
\end{equation}  
thanks to conclusion (b) of Proposition \ref{embeddingresult}. This means that the function $\mathcal{N}_{f_1}$ is well defined. From \eqref{wellpos} it also follows
\begin{equation*}
\begin{split}
\|\mathcal{N}_{f_1}(u,v)\|_{(p_1^*)'} & \le\hat{C}\left(\|u\|_{p_1^*}^{p_1^*}
+\|v\|_{p_2^*}^{p_2^*}+\|\nabla u\|_{p_1}^{p_1}+\|\nabla v\|_{p_2}^{p_2}
+1\right)^{\frac{1}{(p_1^*)'}}\\
&\le \tilde{C}\left(\|u\|_{p_1^*}^{p_1^*-1}+\|v\|_{p_2^*}^{\frac{p_2^*}{(p_1^*)'}}
+\|\nabla u\|_{p_1}^{\frac{p_1}{(p_1^*)'}}+\|\nabla v\|_{p_2}^{\frac{p_2}{(p_1^*)'}}
+1\right),\quad (u,v)\in X.
\end{split}	
\end{equation*}
So, by Proposition \ref{embeddingresult} again, $\mathcal{N}_{f_1}$ maps bounded sets into bounded sets. Finally, since \ref{growthcondition} holds,  a classical result on superposition operators (cf. \cite[Theorem 2.3]{DF}) yields the continuity of $\mathcal{N}_{f_1}$.
\end{proof}
\begin{rmk}
Because of (c) in Proposition \ref{embeddingresult} the function $\mathcal{N}_{f_i}:X \to W^{1,-p_i'}(\Omega)$ enjoys the same properties.   
\end{rmk}
To investigate problem \eqref{prob} we will consider the operator $\mathcal{A}_{\mu_1,\mu_2}:X\to X^*$ given by
\begin{equation}\label{defcalA}
\mathcal{A}_{\mu_1,\mu_2}(u,v)= A_{\mu_1,\mu_2}(u,v)-(\mathcal{N}_{f_1}(u,v),\mathcal{N}_{f_2}(u,v))
\quad\forall\, (u,v)\in X.
\end{equation}
Proposition \ref{P1} shows that if \ref{growthcondition} is satisfied then $\mathcal{A}_{\mu_1,\mu_2}$ turns out well posed, bounded, and continuous for any $(\mu_1,\mu_2) \in\R^2$. Moreover, weak solutions $(u,v)\in X$ of \eqref{prob} comply with the equation
\begin{equation}\label{weaksol}
\mathcal{A}_{\mu_1,\mu_2}(u,v)=0,
\end{equation}
and vice-versa. However, \eqref{weaksol} cannot be solved via the classical suriectivity theorem for pseudo-monotone operators once $\mu_1\vee\mu_2>0$. In fact, $u\mapsto\Delta_p u$ is not pseudo-monotone even when $p=2$; see \cite[p. 1511]{M}.

We will overcome this difficulty by introducing  weaker notions of solution, adapted from those in \cite{M}.
\begin{defi}\label{Def1}
Suppose \ref{growthcondition} holds. A pair $(u,v)\in X$ is  called a \textit{generalized solution} to \eqref{prob} provided there exists a sequence $\{(u_n,v_n)\}\subseteq X$ such that
\begin{itemize}
\item[(a)] $(u_n,v_n)\rightharpoonup (u,v)$ in $X$,
\item[(b)] $\mathcal{A}_{\mu_1,\mu_2}(u_n,v_n)\rightharpoonup 0$ in $X^*$, and
\item[(c)] $\displaystyle{\lim_{n\to\infty}}\langle\mathcal{A}_{\mu_1,\mu_2}(u_n,v_n),(u_n-u,v_n-v)\rangle=0$.
\end{itemize}
\end{defi}
Specifically, this means that:
\begin{equation*}
u_n\rightharpoonup u\;\:\mbox{in $W^{1,p_1}_0(\Omega)$}\quad\mbox{and}\quad  v_n\rightharpoonup v\;\;\mbox{in $W^{1,p_2}_0(\Omega)$}; 
\end{equation*}
\begin{equation*}
\left\{
\begin{aligned}
-\Delta_{p_1} u_n +\mu_1\Delta_{q_1}u_n-\mathcal{N}_{f_1}(u_n,v_n)\rightharpoonup 0\quad \mbox{in $W^{-1,p_1'}(\Omega)$,}\\
-\Delta_{p_2} v_n +\mu_2\Delta_{q_2}v_n-\mathcal{N}_{f_2}(u_n,v_n) \rightharpoonup 0\quad \mbox{in $W^{-1,p_2'}(\Omega)$;}
\end{aligned}
\right.
\end{equation*}
\begin{equation*}
\left\{\begin{aligned}
\langle-\Delta_{p_1} u_n+\mu_1\Delta_{q_1} u_n, u_n-u\rangle-
\int_\Omega\mathcal{N}_{f_1}(u_n,v_n)(u_n-u)\dx\to 0,\\
\langle-\Delta_{p_2} v_n+\mu_2\Delta_{q_2} v_n, v_n-v\rangle-
\int_\Omega\mathcal{N}_{f_2}(u_n,v_n)(v_n-v)\dx\to 0.
\end{aligned}
\right.
\end{equation*}
If we strengthen \ref{growthcondition} as follows:
\begin{enumerate}[label={${\rm (H_1')}$}]
\hypertarget{H1'}{}
\item\label{growthcondition2}
For appropriate $r_i, s_i\in]1,p_i^*[$, $D_i>0$, and $\sigma_i\in L^{(s_i)'}(\Omega)$, $i=1,2$, one has
\begin{equation*}
\begin{split}
|f_1(x,s,t,\xi,\nu)|\le D_1\left[|s|^{\frac{p_1^*}{r_1'}}+|t|^{\frac{p_2^*}{r_1'}}
+|\xi|^{\frac{p_1}{r_1'}}+|\nu|^{\frac{p_2}{r_1'}}\right]+\sigma_1(x), \\
|f_2(x,s,t,\xi,\nu)|\le D_2\left[|s|^{\frac{p_1^*}{r_2'}}+|t|^{\frac{p_2^*}{r_2'}} +|\xi|^{\frac{p_1}{r_2'}}+|\nu|^{\frac{p_2}{r_2'}}\right]+\sigma_2(x)
\end{split}
\end{equation*}
a.e. in $\Omega$ and for every $s,t\in\R$, $\xi,\nu\in\R^N$,
\end{enumerate}
then the definition below can be posited. It should be noted that \ref{growthcondition2} implies \ref{growthcondition}. In fact, from $r_i<p_i^*$ it follows $r_i'>(p_i^*)'$, whence
$$\frac{q}{r_i'}<\frac{q}{(p_i^*)'}\quad\forall\, q\in\{p_1^*,p_2^*,p_1,p_2\}.$$
\begin{defi}\label{Def2}
Assume \ref{growthcondition2} holds. A pair $(u,v)\in X$ is called  a \textit{strong generalized solution} to \eqref{prob} provided there exists a sequence $\{(u_n,v_n)\} \subseteq X$ such that (a)--(b) of Definition \ref{Def1} are fulfilled and, moreover,
\begin{itemize}
\item[$({\rm c}')$] $\displaystyle{\lim_{n\to\infty}}\langle A_{\mu_1,\mu_2}(u_n,v_n),
(u_n-u,v_n-v)\rangle=0$.
\end{itemize}
\end{defi}
\begin{rmk} Until now, we have considered tree kinds of solution: weak, generalized, and strong generalized. One has
$$\mbox{weak}\implies\mbox{strong generalized}\implies\mbox{generalized.}$$
To see the first implication, simply pick $(u_n,v_n):=(u,v)$ for all $n\in\N$. The other can be easily verified arguing as in the proof of Theorem \ref{T2}.
\end{rmk}
We will establish the existence of generalized (respectively, strong generalized) solutions under hypothesis \ref{growthcondition} (respectively, \ref{growthcondition2}) and the next one.
%where $\lambda_{1,p_i}$ denotes the first eigenvalue of $(-\Delta_{p_i}, W^{1,p_i}_0(\Omega))$.
%
\begin{enumerate}[label={${\rm (H_2)}$}]
\hypertarget{H2}{}
\item \label{Nboundeness}
There are constants $c_i,d_i>0$, $i=1,2$, with
\begin{equation}\label{7}
c_1+c_2+(d_1+d_2)\frac{1}{\lambda_{1,p_1}\wedge\lambda_{1,p_2}}<1,
\end{equation}
and functions $\gamma_i\in L^1(\Omega)$ satisfying
\begin{equation*}
\begin{split}
f_1(x,s,t,\xi,\nu)s\le c_1\left(|\xi|^{p_1}+|\nu|^{p_2}\right)
+d_1\left(|s|^{p_1}+|t|^{p_2}\right)+\gamma_1(x),\\
f_2(x,s,t,\xi,\nu)t\le c_2\left(|\xi|^{p_1}+|\nu|^{p_2}\right)
+d_2\left(|s|^{p_1}+|t|^{p_2}\right)+\gamma_2(x)
\end{split}
\end{equation*}
a.e. in $\Omega$ and for all $s,t\in\R$, $\xi,\nu\in\R^N$.
\end{enumerate}
\section{Approximate solutions via Galerkin's method}
Since the Banach space $X$ defined in \eqref{defX} is separable, it admits a Galerkin basis. So, there exists a sequence $\{X_n\}$ of vector sub-spaces of $X$ such that
\begin{itemize}
\item $\dim(X_n)<\infty\;\;\forall\, n\in\N$,
\item $X_n\subseteq X_{n+1}\;\;\forall\, n\in\N$, and
\item $\overline{\bigcup_{n=1}^{\infty} X_n}=X$.
\end{itemize}
Evidently, we may suppose $X_n=U_n\times V_n$, with  $U_n\subseteq W^{1,p_1}_0(\Omega)$ and
$V_n\subseteq W^{1,p_2}_0(\Omega)$.
\begin{prop}\label{galerkin}
Let \hyperlink{H1}{${\rm (H_1)}$}--\hyperlink{H2}{${\rm (H_2)}$} be satisfied and let $(\mu_1, \mu_2)\in\R^2$.
%, and let $\mathcal{A}_{\mu_1,\mu_2}$ be given by \eqref{defcalA}. 
Then for every $n\in\N$ there exists a pair $(u_n,v_n)\in X_n$ such that
\begin{equation}\label{galestimates}
\begin{split} 
&\langle-\Delta_{p_1} u_n +\mu_1\Delta_{q_1}u_n,\phi\rangle- 
\int_\Omega\mathcal{N}_{f_1}(u_n,v_n)\phi\dx =0\\
&\langle-\Delta_{p_2} v_n +\mu_2\Delta_{q_2}v_n,\psi\rangle-
\int_\Omega\mathcal{N}_{f_2}(u_n,v_n)\psi\dx =0
\end{split}
\quad\forall\, (\phi,\psi)\in X_n.
\end{equation}
Moreover, the sequence $\{(u_n,v_n)\}$ is bounded in $X$.
\end{prop}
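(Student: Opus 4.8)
The plan is to prove Proposition \ref{galerkin} in two stages: first, for each fixed $n\in\N$, construct the approximate solution $(u_n,v_n)\in X_n$ via Theorem \ref{brouwer}; second, derive a uniform-in-$n$ a priori bound using hypothesis \hyperlink{H2}{${\rm (H_2)}$}.

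For the existence step, I would define on the finite-dimensional space $X_n$ the operator $B_n:X_n\to X_n^*$ by $B_n(u,v):=\mathcal{A}_{\mu_1,\mu_2}(u,v)\big|_{X_n}$, i.e. the restriction of the duality pairing of $\mathcal{A}_{\mu_1,\mu_2}(u,v)$ to test functions in $X_n$. By the discussion following \eqref{defcalA}, $\mathcal{A}_{\mu_1,\mu_2}$ is continuous from $X$ to $X^*$ under \hyperlink{H1}{${\rm (H_1)}$}, hence $B_n$ is continuous on the finite-dimensional normed space $X_n$. The solvability of \eqref{galestimates} is exactly the equation $B_n(u_n,v_n)=0$. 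To apply Theorem \ref{brouwer} I need a coercivity-type sign condition: there is $R_n>0$ (in fact $R$ independent of $n$, which is what gives the bound for free) with $\langle B_n(u,v),(u,v)\rangle\ge 0$ whenever $\|(u,v)\|=R$. Here one computes
\[
\langle B_n(u,v),(u,v)\rangle=\|\nabla u\|_{p_1}^{p_1}-\mu_1\|\nabla u\|_{q_1}^{q_1}+\|\nabla v\|_{p_2}^{p_2}-\mu_2\|\nabla v\|_{q_2}^{q_2}-\int_\Omega \mathcal{N}_{f_1}(u,v)\,u\dx-\int_\Omega \mathcal{N}_{f_2}(u,v)\,v\dx.
\]
The delicate point is the sign of the terms $-\mu_i\|\nabla\cdot\|_{q_i}^{q_i}$ when $\mu_i>0$: these are \emph{negative} contributions, so one cannot simply bound below by the $p_i$-terms. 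The resolution is that $q_i<p_i$, so by Young's inequality (and the continuous embedding $W^{1,p_i}_0\hookrightarrow W^{1,q_i}_0$ from Proposition \ref{embeddingresult}(a)) one has, for any $\eta>0$, $|\mu_i|\,\|\nabla u\|_{q_i}^{q_i}\le \eta\|\nabla u\|_{p_i}^{p_i}+C(\eta,\mu_i,|\Omega|)$; choosing $\eta$ small absorbs this into the leading term up to an additive constant. For the reaction terms I use \hyperlink{H2}{${\rm (H_2)}$}:
\[
\int_\Omega \mathcal{N}_{f_1}(u,v)\,u\dx\le c_1(\|\nabla u\|_{p_1}^{p_1}+\|\nabla v\|_{p_2}^{p_2})+d_1(\|u\|_{p_1}^{p_1}+\|v\|_{p_2}^{p_2})+\|\gamma_1\|_1,
\]
and similarly for $f_2$; then I estimate $\|u\|_{p_1}^{p_1}\le \lambda_{1,p_1}^{-1}\|\nabla u\|_{p_1}^{p_1}$ via \eqref{5}, and likewise for the other term. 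Collecting everything, the coefficient multiplying $\|\nabla u\|_{p_1}^{p_1}+\|\nabla v\|_{p_2}^{p_2}$ becomes $1-\eta-[c_1+c_2+(d_1+d_2)(\lambda_{1,p_1}\wedge\lambda_{1,p_2})^{-1}]$, which is strictly positive for $\eta$ small precisely by \eqref{7}. Hence $\langle B_n(u,v),(u,v)\rangle\ge \alpha(\|\nabla u\|_{p_1}^{p_1}+\|\nabla v\|_{p_2}^{p_2})-\beta$ with $\alpha>0$, $\beta\ge 0$ independent of $n$; since $\|\nabla u\|_{p_1}^{p_1}+\|\nabla v\|_{p_2}^{p_2}\to\infty$ as $\|(u,v)\|\to\infty$, this is $\ge 0$ on a sphere of radius $R$ with $R$ depending only on $\alpha,\beta,p_1,p_2$. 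Theorem \ref{brouwer} then yields $(u_n,v_n)\in X_n$ with $B_n(u_n,v_n)=0$ and $\|(u_n,v_n)\|\le R$.

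The boundedness claim is then immediate: the $R$ produced above does not depend on $n$, so $\{(u_n,v_n)\}$ is bounded in $X$. The main obstacle throughout is the non-coercivity of the competing operator $A_{i,\mu_i}$ when $\mu_i>0$ — as stressed in the introduction, $\langle A_{i,\mu_i}(tu_0),tu_0\rangle$ changes sign — so the sign condition of Theorem \ref{brouwer} is genuinely in question; the point is that we only need nonnegativity on \emph{one} sphere, not global coercivity, and the subcritical exponent gap $q_i<p_i$ together with the smallness condition \eqref{7} is exactly what makes that one-sphere estimate work. A minor technical care is that all constants arising from Young's inequality and from $\|\gamma_i\|_1$ are independent of $n$ (they are, since $X_n\subseteq X$ and the estimates are performed in the ambient norms of $W^{1,p_i}_0(\Omega)$), which is what legitimizes extracting a single radius $R$.
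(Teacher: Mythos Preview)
Your proposal is correct and follows essentially the same route as the paper: restrict $\mathcal{A}_{\mu_1,\mu_2}$ to $X_n$, use \hyperlink{H2}{${\rm (H_2)}$} together with \eqref{5} and the gap $q_i<p_i$ to obtain the sign condition on a large sphere, then apply Theorem~\ref{brouwer}. The only cosmetic differences are that the paper handles the $q_i$-Laplacian terms via H\"older (keeping a $\|\cdot\|_{1,p_i}^{q_i}$ lower-order term rather than absorbing it by Young) and re-derives the a~priori bound by testing \eqref{galestimates} with $(u_n,v_n)$, whereas you---equivalently---read off boundedness directly from the $n$-independence of $R$.
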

\begin{proof}
Fix $n\in\N$. To shorten notation, write $B:=\mathcal{A}_{\mu_1,\mu_2} \lfloor_{X_n}$. Setting $B:=(B_1,B_2)$ one has
\begin{equation*}
\begin{split}
&\langle B_1(u,v),\phi\rangle=
\langle-\Delta_{p_1} u+\mu_1\Delta_{q_1}u,\phi\rangle
-\int_\Omega\mathcal{N}_{f_1}(u,v)\phi\dx \\
&\langle B_2(u,v),\psi\rangle=
\langle-\Delta_{p_2} v+\mu_2\Delta_{q_2}v,\psi\rangle
-\int_\Omega\mathcal{N}_{f_2}(u,v)\psi\dx
\end{split}
\quad\forall\, (u,v),(\phi,\psi)\in X_n.
\end{equation*}
\textit{Claim:} There exists $R>0$ such that
\begin{equation*}
(\phi,\psi)\in X_n,\;\;\Vert(\phi,\psi)\Vert=R\implies
\langle B(\phi,\psi),(\phi,\psi)\rangle\geq 0.
\end{equation*}
In fact, \ref{Nboundeness} and \eqref{5} imply
\begin{equation}\label{mupos}
\begin{split}
\int_\Omega & \mathcal{N}_{f_1}(\phi,\psi)\phi\dx+
\int_\Omega\mathcal{N}_{f_2}(\phi,\psi)\psi\dx\\
&\leq\int_\Omega c_1\left(|\nabla\phi|^{p_1}+|\nabla\psi|^{p_2}\right)\dx
+\int_\Omega d_1\left(|\phi|^{p_1}+|\psi|^{p_2}\right)\dx+\|\gamma_1\|_1\\
&\phantom{PP}
+\int_\Omega c_2\left(|\nabla\phi|^{p_1}+|\nabla\psi|^{p_2}\right)\dx+
\int_\Omega d_2\left(|\phi|^{p_1}+|\psi|^{p_2}\right)\dx+\|\gamma_2\|_1\\
&\leq 
c_1\left(\|\phi\|^{p_1}_{1,p_1}+\|\psi\|^{p_2}_{1,p_2}\right)
+d_1\left(\lambda_{1,p_1}^{-1}\|\phi\|^{p_1}_{1,p_1}
+\lambda_{1,p_2}^{-1}\|\psi\|^{p_2}_{1,p_2}\right)\\
&\phantom{PP}
+c_2\left(\|\phi\|^{p_1}_{1,p_1}+\|\psi\|^{p_2}_{1,p_2}\right)
+d_2\left(\lambda_{1,p_1}^{-1}\|\phi\|^{p_1}_{1,p_1}
+\lambda_{1,p_2}^{-1}\|\psi\|^{p_2}_{1,p_2}\right)+
\|\gamma_1\|_1+\|\gamma_2\|_1\\
&=\left[c_1+c_2+(d_1+d_2)\lambda_{1,p_1}^{-1}\right]
\|\phi\|^{p_1}_{1,p_1}
+\left[c_1+c_2+(d_1+d_2)\lambda_{1,p_2}^{-1}\right]
\|\psi\|^{p_2}_{1,p_2}+C,
\end{split}
\end{equation}
where $C:=\Vert\gamma_1\Vert_1+\Vert\gamma_2\Vert_1$. Since $q_i<p_i$, using H\"older's inequality we thus obtain
\begin{equation*}
\begin{split}
\langle & B(\phi,\psi),(\phi,\psi)\rangle\\
&\geq\left[1-c_1-c_2-(d_1+d_2)\lambda_{1,p_1}^{-1}\right]
\|\phi\|^{p_1}_{1,p_1}
+\left[1-c_1-c_2-(d_1+d_2)\lambda_{1,p_2}^{-1}\right]
\|\psi\|^{p_2}_{1,p_2}\\
&\phantom{PP}-|\mu_1||\Omega|^{\frac{p_1-q_1}{p_1}} \|\phi\|^{q_1}_{1,p_1}
-|\mu_2||\Omega|^{\frac{p_2-q_2}{p_2}}\|\psi\|^{q_2}_{1,p_2}-C,\\
\end{split}
\end{equation*}
and the claim easily follows from \eqref{7}.\\
Now, Theorem  \ref{brouwer} gives a pair $(u_n,v_n)\in X_n$ such that $B(u_n,v_n)=0$, which entails \eqref{galestimates}. Finally, letting $(\phi,\psi)=(u_n,v_n)$ in \eqref{galestimates} and arguing as done for \eqref{mupos} produces
\begin{equation*}
\begin{split}
\| u_n\|^{p_1}_{1,p_1}+ & \| v_n\|^{p_2}_{1,p_2}\\
& \le |\mu_1|\|u_n\|^{q_1}_{1,q_1}+|\mu_2|\|v_n\|^{q_2}_{1,q_2}
+\int_\Omega\mathcal{N}_{f_1}(u_n,v_n)u_n\dx+\int_\Omega\mathcal{N}_{f_2}(u_n,v_n)v_n\dx\\
&\le |\mu_1||\Omega|^{\frac{p_1-q_1}{p_1}} \| u_n\|^{q_1}_{1,p_1}
+|\mu_2||\Omega|^{\frac{p_2-q_2}{p_2}}\| v_n\|^{q_2}_{1,p_2}\\
&\phantom{PPPP}+\left[c_1+c_2+(d_1+d_2)
(\lambda_{1,p_1}\wedge\lambda_{1,p_2})^{-1}\right]
\left(\| u_n\|^{p_1}_{1,p_1}+\| v_n\|^{p_2}_{1,p_2}\right)+C
\end{split}
\end{equation*}
for every $n\in\N$. At this point, the boundedness of $\{(u_n,v_n)\} \subseteq X$ is an immediate consequence of $q_i<p_i$, $i=1,2$, and \eqref{7}.
\end{proof}
\section{Existence of generalized solutions}
\begin{thm}\label{T1}
Under hypotheses \hyperlink{H1}{${\rm (H_1)}$}--\hyperlink{H2}{${\rm (H_2)}$}, for every $(\mu_1,\mu_2)\in\R^2$ problem \eqref{prob} possesses a solution in the sense of Definition \ref{Def1}.
\end{thm}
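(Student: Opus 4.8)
The plan is to feed the Galerkin sequence $\{(u_n,v_n)\}$ from Proposition \ref{galerkin} into Definition \ref{Def1}. By that proposition, $\{(u_n,v_n)\}$ is bounded in $X$; since $X$ is reflexive (each $W^{1,p_i}_0(\Omega)$ is), we extract a subsequence, not relabeled, with $(u_n,v_n)\rightharpoonup(u,v)$ in $X$ for some $(u,v)\in X$. This is condition (a). Using the compact embedding in Proposition \ref{embeddingresult}(b), along the subsequence we also have $u_n\to u$ in $L^{s_1}(\Omega)$ for every $s_1<p_1^*$ and $v_n\to v$ in $L^{s_2}(\Omega)$ for every $s_2<p_2^*$, and up to a further subsequence these convergences hold a.e.\ in $\Omega$.

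Next I would verify (b), i.e.\ $\mathcal{A}_{\mu_1,\mu_2}(u_n,v_n)\rightharpoonup 0$ in $X^*$. Fix $(\phi,\psi)\in X$. Because $\overline{\bigcup_n X_n}=X$, choose $(\phi_k,\psi_k)\in X_{n_k}$ with $(\phi_k,\psi_k)\to(\phi,\psi)$ in $X$. The Galerkin identity \eqref{galestimates} says $\langle\mathcal{A}_{\mu_1,\mu_2}(u_n,v_n),(\phi_k,\psi_k)\rangle=0$ as soon as $n\ge n_k$, since then $X_{n_k}\subseteq X_n$ and $(\phi_k,\psi_k)\in X_n$. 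The boundedness of $\{(u_n,v_n)\}$ together with the boundedness of $A_{\mu_1,\mu_2}$ and of the Nemytskii operators (Proposition \ref{P1}) gives a uniform bound $\|\mathcal{A}_{\mu_1,\mu_2}(u_n,v_n)\|_{X^*}\le M$. Hence for each $k$,
\begin{equation*}
\limsup_{n\to\infty}|\langle\mathcal{A}_{\mu_1,\mu_2}(u_n,v_n),(\phi,\psi)\rangle|
=\limsup_{n\to\infty}|\langle\mathcal{A}_{\mu_1,\mu_2}(u_n,v_n),(\phi,\psi)-(\phi_k,\psi_k)\rangle|
\le M\|(\phi,\psi)-(\phi_k,\psi_k)\|,
\end{equation*}
and letting $k\to\infty$ yields $\langle\mathcal{A}_{\mu_1,\mu_2}(u_n,v_n),(\phi,\psi)\rangle\to 0$. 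As $(\phi,\psi)\in X$ was arbitrary, (b) follows.

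Finally, for (c) I must show $\langle\mathcal{A}_{\mu_1,\mu_2}(u_n,v_n),(u_n-u,v_n-v)\rangle\to 0$. Split this into the test-function part and the reaction part. For the reaction part, $\int_\Omega\mathcal{N}_{f_i}(u_n,v_n)(u_n-u)\dx$ (and the analogous term for $v$) tends to $0$: the family $\{\mathcal{N}_{f_i}(u_n,v_n)\}$ is bounded in $L^{(p_i^*)'}(\Omega)$ by Proposition \ref{P1}, while $u_n-u\to 0$ strongly in $L^{p_i^*-\epsilon}(\Omega)$; a short interpolation/Hölder argument (bounded in $L^{p_i^*}$ and strongly to $0$ in a lower $L^s$) gives $u_n-u\to 0$ in $L^{p_i^*}$-weak-paired topology, which suffices — more directly, $\{\mathcal{N}_{f_i}(u_n,v_n)\}$ being bounded in $L^{(p_i^*)'}$ and $u_n-u\rightharpoonup 0$ in $L^{p_i^*}$ with the product handled via the compact embedding into $L^{s}$, $s<p_i^*$, against which $\mathcal{N}_{f_i}(u_n,v_n)$ is bounded in $L^{s'}\subseteq L^{(p_i^*)'}$... the cleanest route is: write $\langle\mathcal{N}_{f_i}(u_n,v_n),u_n-u\rangle$ and use that $\mathcal N_{f_i}(u_n,v_n)$ is bounded in $W^{1,-p_i'}(\Omega)$ (Remark after Proposition \ref{P1}) while $u_n-u\rightharpoonup 0$ in $W^{1,p_i}_0(\Omega)$ — this pairing need not vanish, so instead exploit that $u_n-u\to 0$ strongly in $L^{p_i}(\Omega)$ and the bound $|f_i(\cdot,u_n,v_n,\nabla u_n,\nabla v_n)|\in L^{(p_i^*)'}$ dominates against $L^{p_i^*}$; since $p_i<p_i^*$, Hölder with exponents making the product integrable plus strong $L^{p_i}$ convergence gives the claim after one more interpolation. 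For the principal-operator part, $\langle A_{\mu_1,\mu_2}(u_n,v_n),(u_n-u,v_n-v)\rangle$: combining the vanishing of the reaction part with (b) tested cleverly is not enough because $(u_n-u,v_n-v)\notin X_n$ in general; the device is to use the Galerkin identity with $(\phi,\psi)$ replaced by the projection of $(u,v)$ onto $X_n$ (or an approximant $(\phi_n,\psi_n)\in X_n$ with $(\phi_n,\psi_n)\to(u,v)$ in $X$), giving $\langle\mathcal{A}_{\mu_1,\mu_2}(u_n,v_n),(u_n-\phi_n,v_n-\psi_n)\rangle=0$, then write $(u_n-u,v_n-v)=(u_n-\phi_n,v_n-\psi_n)+(\phi_n-u,\psi_n-v)$ and observe the second summand tends to $0$ strongly while $\mathcal{A}_{\mu_1,\mu_2}(u_n,v_n)$ is bounded in $X^*$. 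Hence $\langle\mathcal{A}_{\mu_1,\mu_2}(u_n,v_n),(u_n-u,v_n-v)\rangle\to 0$, which is (c). The main obstacle is precisely this last point: since $(u_n,v_n)$ solves the equation only against test functions from $X_n$, one cannot test directly with $(u_n-u,v_n-v)$, and the whole argument hinges on approximating $(u,v)$ within the Galerkin subspaces and exploiting the uniform $X^*$-bound on $\mathcal{A}_{\mu_1,\mu_2}(u_n,v_n)$ to absorb the approximation error; the reaction terms, by contrast, are routine once Proposition \ref{P1} and the compact Sobolev embeddings are in hand.
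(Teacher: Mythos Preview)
Your final device for (c) --- choose $(\phi_n,\psi_n)\in X_n$ with $(\phi_n,\psi_n)\to(u,v)$ in $X$, use the Galerkin identity to kill $\langle\mathcal{A}_{\mu_1,\mu_2}(u_n,v_n),(u_n-\phi_n,v_n-\psi_n)\rangle$, and absorb the remainder by the uniform $X^*$-bound --- is correct and is essentially what the paper does. The paper's version is marginally simpler: it tests \eqref{galestimates} directly with $(\phi,\psi)=(u_n,v_n)\in X_n$ to obtain $\langle\mathcal{A}_{\mu_1,\mu_2}(u_n,v_n),(u_n,v_n)\rangle=0$ for every $n$, and then uses (b) with the \emph{fixed} pair $(u,v)$ to get $\langle\mathcal{A}_{\mu_1,\mu_2}(u_n,v_n),(u,v)\rangle\to 0$; subtracting gives (c) with no approximation of $(u,v)$ in $X_n$ needed. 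Your argument for (b) is a minor (and perfectly valid) variant of the paper's weak-$*$ compactness route.

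The long detour before that correct argument, however, should be excised. You attempt to split the pairing into a reaction part and a principal part and claim, after some back-and-forth, that $\int_\Omega\mathcal{N}_{f_i}(u_n,v_n)(u_n-u)\dx\to 0$ ``after one more interpolation''. Under \ref{growthcondition} alone this is not justified: $\{\mathcal{N}_{f_i}(u_n,v_n)\}$ is bounded only in $L^{(p_i^*)'}(\Omega)$, whose dual pairing forces the other factor into $L^{p_i^*}(\Omega)$, and $u_n-u$ converges there only weakly (the critical Sobolev embedding is not compact). No interpolation rescues this, because you have no bound on $\mathcal{N}_{f_i}(u_n,v_n)$ in any $L^{s'}$ with $s<p_i^*$. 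This is precisely the gap between \ref{growthcondition} and \ref{growthcondition2}: the paper needs the stronger hypothesis to prove exactly this separated limit in Theorem~\ref{T2}. Since your final argument establishes (c) for the full operator $\mathcal{A}_{\mu_1,\mu_2}$ at once, the split is unnecessary here; drop that paragraph and the proof is clean.
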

\begin{proof}
Fix $(\mu_1,\mu_2)\in\R^2$. Proposition \ref{galerkin} provides a bounded sequence $\{(u_n,v_n)\}\subseteq X$ fulfilling \eqref{galestimates}. Since $X$ is reflexive, up to sub-sequences one has $(u_n,v_n) \rightharpoonup (u,v)$ in $X$, i.e., condition (a) of Definition
\ref{Def1} holds. Moreover, the boundedness of the operator $\mathcal{A}_{\mu_1,\mu_2}$ given by \eqref{defcalA} yields a pair $(\eta_1,\eta_2)\in X^*$ such that
\begin{equation}\label{weakcalA}
\mathcal{A}_{\mu_1,\mu_2}(u_n,v_n)=
A_{\mu_1,\mu_2}(u_n,v_n)-(\mathcal{N}_{f_1}(u_n,v_n), \mathcal{N}_{f_2}(u_n,v_n))\rightharpoonup(\eta_1,\eta_2) \quad\mbox{in $X^*$.}
\end{equation}
If $(\phi,\psi)\in\cup_{n=1}^\infty X_n$ then $(\phi,\psi)$ belongs to $X_n=U_n\times V_n$ for any $n$ large enough. From \eqref{galestimates} and \eqref{weakcalA} it thus follows, after letting $n\to\infty$,
\begin{equation*}
\langle\eta_1,\phi\rangle=\langle\eta_2,\psi\rangle=0.
\end{equation*}
This entails $\eta_1=\eta_2=0$, because $(\phi,\psi)$ was arbitrary, $\overline{\cup_{n=1}^\infty U_n}= W^{1,p_1}_0(\Omega)$, and $\overline{\cup_{n=1}^\infty V_n} =W^{1,p_2}_0(\Omega)$. Therefore, assertion (b) of Definition \ref{Def1} is true. Through \eqref{weakcalA} we next have
\begin{equation}\label{bcond}
\begin{split}
&\langle-\Delta_{p_1} u_n+\mu_1\Delta_{q_1}u_n
-\mathcal{N}_{f_1}(u_n,v_n),u\rangle\to 0,\\
&\langle-\Delta_{p_2} v_n +\mu_2\Delta_{q_2}v_n
-\mathcal{N}_{f_2}(u_n,v_n),v\rangle\to 0
\end{split}
\end{equation}
while \eqref{galestimates} yields
\begin{equation}\label{galerkconb}
\begin{split}
&\| u_n\|^{p_1}_{1,p_1}=\mu_1\| u_n\|^{q_1}_{1,q_1}
+\int_\Omega\mathcal{N}_{f_1}(u_n,v_n)u_n\dx, \\
&\| v_n\|^{p_2}_{1,p_2}=\mu_2\| v_n\|^{q_2}_{1,q_2}+
\int_\Omega\mathcal{N}_{f_2}(u_n,v_n)v_n\dx
\end{split}
\quad\forall\, n\in\N.
\end{equation}
Gathering \eqref{bcond} and \eqref{galerkconb}
together produces
\begin{equation}\label{c_1}
\left\{
\begin{aligned}
&\langle-\Delta_{p_1} u_n +\mu_1\Delta_{q_1}u_n-\mathcal{N}_{f_1}(u_n,v_n),u_n-u\rangle\to 0,\\
&\langle-\Delta_{p_2} v_n +\mu_2\Delta_{q_2}v_n-\mathcal{N}_{f_2}(u_n,v_n),v_n-v\rangle\to 0.
\end{aligned}
\right.
\end{equation}
So, condition (c) of Definition \ref{Def1} holds, which means that $(u,v)$ turns out a generalized solution to \eqref{prob}.
\end{proof}
\begin{thm}\label{T2}
Let \hyperlink{H1'}{${\rm (H_1')}$}--\hyperlink{H2}{${\rm (H_2)}$} be satisfied and let $(\mu_1,\mu_2)\in\R^2$. Then problem \eqref{prob} admits a solution $(u,v)\in X$ in the sense of Definition \ref{Def2}. Moreover, $(u,v)$ is a weak solution once $\mu_1\vee\mu_2<0$.
\end{thm}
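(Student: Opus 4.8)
The plan is to start from the bounded sequence $\{(u_n,v_n)\}\subseteq X$ of finite-dimensional approximate solutions produced by Proposition \ref{galerkin}, which under the stronger hypothesis \ref{growthcondition2} (implying \ref{growthcondition}) is still available. As in the proof of Theorem \ref{T1}, after passing to a subsequence we get $(u_n,v_n)\rightharpoonup(u,v)$ in $X$ (condition (a)), and boundedness of $\mathcal{A}_{\mu_1,\mu_2}$ plus the density of $\bigcup_n X_n$ forces $\mathcal{A}_{\mu_1,\mu_2}(u_n,v_n)\rightharpoonup 0$ in $X^*$ (condition (b)). The only new point is condition $({\rm c}')$: we must show
$$\langle A_{\mu_1,\mu_2}(u_n,v_n),(u_n-u,v_n-v)\rangle\to 0,$$
i.e. that we may replace the full operator $\mathcal{A}_{\mu_1,\mu_2}$ by its principal part $A_{\mu_1,\mu_2}$. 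Since Theorem \ref{T1} already gives $\langle\mathcal{A}_{\mu_1,\mu_2}(u_n,v_n),(u_n-u,v_n-v)\rangle\to 0$, this reduces to proving
$$\int_\Omega\mathcal{N}_{f_1}(u_n,v_n)(u_n-u)\dx\to 0,\qquad
\int_\Omega\mathcal{N}_{f_2}(u_n,v_n)(v_n-v)\dx\to 0.$$

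For this the key is that \ref{growthcondition2} gives, for $i=1$, the bound
$$|f_1(\cdot,u_n,v_n,\nabla u_n,\nabla v_n)|\le
D_1\big[|u_n|^{p_1^*/r_1'}+|v_n|^{p_2^*/r_1'}+|\nabla u_n|^{p_1/r_1'}+|\nabla v_n|^{p_2/r_1'}\big]+\sigma_1$$
with $\sigma_1\in L^{s_1'}(\Omega)$ and $1<r_1,s_1<p_1^*$. Arguing exactly as in Proposition \ref{P1} but tracking the exponent $r_1'$ instead of $(p_1^*)'$, one checks that $\mathcal{N}_{f_1}:X\to L^{r_1'}(\Omega)$ is well posed and bounded, hence $\{\mathcal{N}_{f_1}(u_n,v_n)\}$ is bounded in $L^{r_1'}(\Omega)$ (and similarly $\{\mathcal{N}_{f_1}(u_n,v_n)\}$ is bounded in $L^{s_1'}(\Omega)$ because of the $\sigma_1$ term — in fact one works with the exponent $(s_1\vee r_1)'$, or simply notes both give integrable control). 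On the other hand, since $r_1<p_1^*$, conclusion (b) of Proposition \ref{embeddingresult} yields $W^{1,p_1}_0(\Omega)\hookrightarrow L^{r_1}(\Omega)$ \emph{compactly}, so $u_n\to u$ strongly in $L^{r_1}(\Omega)$, whence $u_n-u\to 0$ in $L^{r_1}(\Omega)$. Applying H\"older's inequality with conjugate exponents $r_1$ and $r_1'$,
$$\left|\int_\Omega\mathcal{N}_{f_1}(u_n,v_n)(u_n-u)\dx\right|
\le\|\mathcal{N}_{f_1}(u_n,v_n)\|_{r_1'}\,\|u_n-u\|_{r_1}\to 0,$$
and likewise for the $f_2$-term with $r_2,s_2$. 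Combining with Theorem \ref{T1}'s limit (c) gives $({\rm c}')$, so $(u,v)$ is a strong generalized solution.

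It remains to treat the case $\mu_1\vee\mu_2<0$. Here $A_{\mu_1,\mu_2}$ is monotone (as recorded in the excerpt), and more: each component $-\Delta_{p_i}w+\mu_i\Delta_{q_i}w$ with $\mu_i<0$ is of type $(\mathrm S)_+$ on $W^{1,p_i}_0(\Omega)$ (this is standard for the $(p_i,q_i)$-type operator; cf. \cite{Pe,MaMo}), hence $A_{\mu_1,\mu_2}$ is of type $(\mathrm S)_+$ on $X$. From $({\rm c}')$ we have $\langle A_{\mu_1,\mu_2}(u_n,v_n),(u_n,v_n)-(u,v)\rangle\to 0$ with $(u_n,v_n)\rightharpoonup(u,v)$, so the $(\mathrm S)_+$ property forces $(u_n,v_n)\to(u,v)$ strongly in $X$. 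Then continuity of $A_{\mu_1,\mu_2}$ and of the Nemytskii maps (Proposition \ref{P1}) gives $\mathcal{A}_{\mu_1,\mu_2}(u_n,v_n)\to\mathcal{A}_{\mu_1,\mu_2}(u,v)$ in $X^*$; comparing with (b), $\mathcal{A}_{\mu_1,\mu_2}(u,v)=0$, i.e. \eqref{weaksol} holds and $(u,v)$ is a weak solution.

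The main obstacle I anticipate is the bookkeeping in the first paragraph, namely verifying that under \ref{growthcondition2} the Nemytskii operator maps $X$ boundedly into $L^{r_i'}(\Omega)$ (equivalently, choosing a single Lebesgue exponent that simultaneously absorbs the polynomial terms with exponent $r_i'$ and the datum $\sigma_i\in L^{s_i'}(\Omega)$) so that the compact embedding $W^{1,p_i}_0(\Omega)\hookrightarrow L^{r_i}(\Omega)$ can be paired against it via H\"older. Once that exponent arithmetic is pinned down, the convergence of the reaction integrals — and hence $({\rm c}')$ — follows routinely, and the $(\mathrm S)_+$ upgrade in the monotone regime is standard.
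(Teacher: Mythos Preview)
Your proposal is correct and follows essentially the same route as the paper. Two small points where the paper's execution differs from yours: (i) the exponent bookkeeping you flag as the main obstacle is resolved in the paper not by seeking a single Lebesgue exponent for $\mathcal{N}_{f_1}$, but simply by splitting the integral via the growth bound itself---H\"older with the pair $(r_1,r_1')$ on the polynomial terms and with $(s_1,s_1')$ on the $\sigma_1$-term---and then using the two compact embeddings $W^{1,p_1}_0(\Omega)\hookrightarrow L^{r_1}(\Omega)$ and $W^{1,p_1}_0(\Omega)\hookrightarrow L^{s_1}(\Omega)$ separately (both hold since $r_1\vee s_1<p_1^*$); (ii) for $\mu_1\vee\mu_2<0$ the paper does not invoke the $(\mathrm S)_+$ property of the full $(p_i,q_i)$-operator, but instead uses monotonicity of $-|\mu_i|\Delta_{q_i}$ to drop that piece from $({\rm c}')$, obtaining $\limsup_n\langle-\Delta_{p_i}u_n,u_n-u\rangle\le 0$, and then applies the $(\mathrm S)_+$ property of $-\Delta_{p_i}$ alone. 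Both of your variants are valid; the paper's choices just avoid the need to pin down a common exponent or to cite the $(\mathrm S)_+$ property of the composite operator.
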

\begin{proof}
Since \ref{growthcondition2} implies \ref{growthcondition}, the same arguments adopted to prove Theorem \ref{T1} furnish both a bounded sequence $\{(u_n, v_n)\}\subseteq X$ and $(u, v)\in X$ satisfying (a)--(b) of Definition \ref{Def2} as well as \eqref{c_1}. Thus, it remains to verify $({\rm c}')$. Thanks to \ref{growthcondition2} and H\"older's inequality we obtain
\begin{equation*}
\begin{split}
& \left| \int_\Omega\mathcal{N}_{f_1}(u_n,v_n)(u_n-u)\dx\right|\\
&\le D_1\int_\Omega\left(|u_n|^{\frac{p_1^*}{r_1'}} +|v_n|^{\frac{p_2^*}{r_1'}}+|\nabla u_n|^{\frac{p_1}{r_1'}}
+|\nabla v_n|^{\frac{p_2}{r_1'}}\right)|u_n-u|\dx +\int_\Omega\sigma_1|u_n-u|\dx\\
&\le D_1\left(\|u_n\|^{r_1'}_{p_1^*}+\|v_n\|^{r_1'}_{p_2^*}
+\|u_n\|^{r_1'}_{1,p_1}+\|v_n\|^{r_1'}_{1,p_2}\right)
\|u_n-u\|_{r_1}+\|\sigma_1\|_{s_1'}\|u_n-u\|_{s_1}\\
&\le C\|u_n-u\|_{r_1}+\|\sigma_1\|_{s_1'}\|u_n-u\|_{s_1}
\end{split}	
\end{equation*}
for all $n\in\N$, because $\{(u_n,v_n)\}$ is bounded. The condition $r_1\vee s_1<p_1^*$ then entails $u_n\to u$ in $L^{r_1}(\Omega)\cap L^{s_1}(\Omega)$, where a sub-sequence is considered when necessary; cf. Proposition \ref{embeddingresult}. Therefore,
\begin{equation}\label{c'f1}
\lim_{n\to\infty}\int_\Omega\mathcal{N}_{f_1}(u_n,v_n)(u_n-u)\dx=0.
\end{equation}
Analogously, one has
\begin{equation}\label{c'f2}
\lim_{n\to\infty}\int_\Omega\mathcal{N}_{f_2}(u_n,v_n)(u_n-u)\dx=0.
\end{equation}
From \eqref{c_1}--\eqref{c'f2} it follows
\begin{equation*}
\lim_{n\to\infty}\langle-\Delta_{p_1} u_n+\mu_1\Delta_{q_1}u_n, u_n-u\rangle
=\lim_{n\to\infty}\langle-\Delta_{p_2} v_n+\mu_2\Delta_{q_2}v_n, v_n-v\rangle=0,
\end{equation*}
i.e., $({\rm c}')$ of Definition \ref{Def2} holds, and $(u,v)$ is a strong generalized solution to \eqref{prob}.

Finally, let $\mu_1\vee\mu_2<0$. We will show that $(u,v)$ fulfills \eqref{weaksol}, that is
\begin{equation}\label{cthm2}
\begin{split} 
&\langle-\Delta_{p_1} u-|\mu_1|\Delta_{q_1}u,\phi\rangle
-\int_\Omega\mathcal{N}_{f_1}(u,v)\phi\dx=0,\\
&\langle-\Delta_{p_2} v -|\mu_2|\Delta_{q_2}v,\psi\rangle
-\int_\Omega\mathcal{N}_{f_2}(u,v)\psi\dx=0
\end{split}
\quad\forall\, (\phi,\psi)\in X,
\end{equation}
which completes the proof. The monotonicity of $-|\mu_i| \Delta_{q_i}$ and $({\rm c}')$ yield
\begin{equation*}
\begin{split}
&\limsup_{n\to\infty}\langle-\Delta_{p_1}u_n,u_n-u\rangle \le\lim_{n\to\infty}\langle-\Delta_{p_1} u_n-|\mu_1| \Delta_{q_1}u_n, u_n-u\rangle=0,\\
&\limsup_{n\to\infty}\langle-\Delta_{p_2}v_n,v_n-v\rangle
\le\lim_{n\to\infty}\langle-\Delta_{p_2}v_n-|\mu_2| \Delta_{q_2}v_n,v_n-v\rangle=0.
\end{split}	
\end{equation*}
Since the operator $-\Delta_{p_i}$ is of type $({\rm S})_+$, we infer $(u_n,v_n)\to (u,v)$ in $X$. Now, combining the continuity of $\mathcal{A}_{\mu_1,\mu_2}$ with condition (b) in Definition \ref{Def1} one has
\begin{equation*}
\mathcal{A}_{\mu_1,\mu_2}(u,v) =\lim_{n\to\infty}\mathcal{A}_{\mu_1,\mu_2}(u_n,v_n)=0,
\end{equation*}
namely \eqref{cthm2} holds true.
\end{proof}
The next example provides a natural class of reactions $f_1,f_2$ that fulfill \hyperlink{H1}{${\rm (H_1)}$}--\hyperlink{H2}{${\rm (H_2)}$}.
\begin{ex}\label{finex}
Let $f_i:\Omega\times\R^2\times\R^{2N}\to \R$, $i=1,2$, be given by
\begin{equation*}
\begin{split}
&f_1(x,s,t,\xi,\nu):=|s|^{\alpha_1-2}s
+\frac{s}{s^2+1}\left(|t|^{\frac{p_2}{(p_1^*)'}} +|\xi|^{\beta_1}+|\nu|^{\frac{p_2}{(p_1^*)'}}+h_1(x)\right),\\
&f_2(x,s,t,\xi,\nu):=|t|^{\alpha_2-2}t
+\frac{t}{t^2+1}\left(|s|^{\frac{p_1}{(p_2^*)'}} +|\xi|^{\frac{p_1}{(p_2^*)'}}+|\nu|^{\beta_2}+h_2(x)\right),
\end{split}
\end{equation*}
where $1\leq\alpha_i<p_i$, $1\leq\beta_i<\frac{p_i}{(p_i^*)'}$, and $h_i\in L^{(p_i^*)'}(\Omega)$. Observe that
\begin{equation*}
\begin{split}
|f_1(x,s,t,\xi,\nu)| & \leq |s|^{\alpha_1-1}
+|t|^{\frac{p_2}{(p_1^*)'}}+|\xi|^{\beta_1}
+|\nu|^{\frac{p_2}{(p_1^*)'}}+|h_1(x)|\\
&\leq |s|^{p_1^*-1}+|t|^{\frac{p_2^*}{(p_1^*)'}} +|\xi|^{\frac{p_1}{(p_1^*)'}}+|\nu|^{\frac{p_2}{(p_1^*)'}} +|h_1(x)|+3
\end{split}
\end{equation*}
because $\alpha_1<p_1<p_1^*$, $p_2<p_2^*$,
$\beta_1<\frac{p_1}{(p_1^*)'}$. Similarly,
\begin{equation*}
\begin{split}
|f_2(x,s,t,\xi,\nu)| & \leq |s|^{\frac{p_1}{(p_2^*)'}}
+|t|^{\alpha_2-1}+|\xi|^{\frac{p_1}{(p_2^*)'}} +|\nu|^{\beta_2}+|h_2(x)|\\
&\leq |s|^{\frac{p_1^*}{(p_2^*)'}}+|t|^{p_2^*-1}
+|\xi|^{\frac{p_1}{(p_1^*)'}}+|\nu|^{\frac{p_2}{(p_2^*)'}} +|h_2(x)|+3
\end{split}
\end{equation*}
as $p_1<p_1^*$, $\alpha_2<p_2<p_2^*$, $\beta_2<\frac{p_2}{(p_2^*)'}$. Hence, \ref{growthcondition} is true with $C_1:=C_2:=1$ and $\sigma_i(x):=|h_i(x)|+3$. If $c_i,d_i>0$, $i=1,2$, satisfy \eqref{7} then, with appropriate $\hat c_i>0$, one has both
\begin{equation*}
\begin{split}
f_1(x,s,t,\xi,\nu)s & \leq |s|^{\alpha_1}+|t|^{\frac{p_2}{(p_1^*)'}} +|\xi|^{\beta_1}+|\nu|^{\frac{p_2}{(p_1^*)'}}+h_1(x)\\
&\leq c_1(|\xi|^{p_1}+|\nu|^{p_2})+d_1(|s|^{p_1}+|t|^{p_2})+h_1(x)+\hat c_1,
\end{split}
\end{equation*}
since $\alpha_1<p_1$, $\frac{p_2}{(p_1^*)'}<p_2$, $\beta_1<\frac{p_1}{(p_1^*)'}<p_1$, and 
\begin{equation*}
\begin{split}
f_2(x,s,t,\xi,\nu)t & \leq |s|^{\frac{p_1}{(p_2^*)'}}+|t|^{\alpha_2}
+|\xi|^{\frac{p_1}{(p_2^*)'}}+|\nu|^{\beta_2}+h_2(x)\\
&\leq c_2(|\xi|^{p_1}+|\nu|^{p_2})+d_2(|s|^{p_1}+|t|^{p_2})+h_2(x)+\hat c_2,
\end{split}
\end{equation*}
because $\frac{p_1}{(p_2^*)'}<p_1$, $\alpha_2<p_2$, $\beta_2<\frac{p_2}{(p_2^*)'}<p_2$. Therefore, setting $\gamma_i(x)=h_i(x)+\hat c_i$, we see that also \hyperlink{H2}{${\rm (H_2)}$} holds.
\end{ex}
\section*{Acknowledgments}
\noindent L. Gambera and S.A. Marano are members of GNAMPA of INDAM.

S.A. Marano was supported by the following research projects: 1) PRIN 2017 `Nonlinear Differential Problems via Variational, Topological and Set-valued Methods' (Grant No. 2017AYM8XW) of MIUR; 2) `MO.S.A.I.C.' PRA 2020--2022 `PIACERI' Linea 2 of the University of Catania.
%
%\begin{small}

%\end{small}
%
\end{document}